\newtheorem{definition}{Definition}
\newtheorem{theorem}{Theorem}
\newtheorem{lemma}[theorem]{Lemma}
\newtheorem{remark}{Remark}
\title{In theory, \textit{You crack under pressure!}}
\author{Elvis Dohmatob}
\begin{document}
\maketitle

\section{Life at the opera}
\textit{``It's opening night at the opera, and your friend
is the \textit{prima donna}
(the lead female singer). You will not be in the audience, but you
want to make sure she receives a standing ovation -- with every
audience member standing up and clapping their hands for her.''}
-- Google code jam \cite{codejam}.

\paragraph*{Notation} $\mathbb{N} = \{0, 1, 2, ...\}$ denotes the
  natural numbers and $[k] := [0, k] \cap \mathbb{N} = \{0, 1, 2, ...,
  k\}$, for every $k \in \mathbb{N}$.

\begin{definition}[Shyness \cite{codejam}]
  A spectator is said to have shyness level $s \in \mathbb{N}$ if they only
stand up for an applause when $s$ or more spectators are already up
and applauding. Let $p_s \in \mathbb{N}$ be the
number  spectators with shyness level $s$.
\end{definition}

\paragraph*{Problem statement}
Given a prescribed distribution $(p_0, p_1, ...., p_k)$ of shyness in
the audience,  with $p_k \ne 0$ (i.e $k$ is the shyness level of the
shyest spectators), invite as few friends of the prima dona as
possible (with their shyness levels distributed as you wish), say
$r(p_0, p_1, ..., p_k)$ friends, so that at the end she receives a
standing  ovation
\cite{codejam}.

\begin{remark} The natural number $r(p_0, p_1, ..., p_k)$ is
  well-defined, thanks to the well-ordering principle.
\end{remark}

\subsection{Some examples to warm up.}
The following examples are taken from the reference \cite{codejam}.
\begin{itemize}
\item[1.] $r(1, 1, ..., 1) = 0$. This is because the audience will
  eventually produce a standing ovation on its own.
\item[2.] $r(0, 9) = 1$. Inviting a \textit{bold} friend is optimal.
\item[3.] $r(1, 1, 0, 0, 1, 1) = 2$. Inviting two friends with shyness
  level $2$ is optimal.
\end{itemize}

\section{Solution: A short reliable program}
We will prove that: \textit{(a)} $r(p_0, p_1, ..., p_k) \le k$; and
\textit{(b)} $r(p_0, p_1, ..., p_k)$ is computable in
linear time $\mathcal{O}(k)$. Moreover, the proof will be
constructive, producing an algorithm which effectively computes
$r(p_0, p_1, p_2, ..., p_k)$ in $k$ steps.

\subsection{Preliminaries}
\begin{definition}[Insolubility of shyness levels]
Given a shyness level $s \in [k]$, the audience is said
to be $s$-insoluble iff there is a shyness level $s' \in [s]$ such
that $\sum_{j \in [s'-1]}p_j < s'$. Otherwise, we say
the audience is $s$-soluble.
\end{definition}

The idea behind insolubility is the following. An audience which
is  $s$-insoluble contains less than $s$ spectators who have
shyness less than $s$. Therefore these guys will never stand up, thus
blocking the guys with shyness level $s$. In particular, there won't
be a standing ovation for the prima dona.

The following Lemma gives a powerful necessary
and sufficient condition for a standing ovation to eventually occur.
\begin{lemma}
  There will eventually be a standing ovation iff the audience is
  $k$-soluble.
\label{thm:xtic}
\end{lemma}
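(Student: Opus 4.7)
The plan is to model the applause dynamics as a monotone iteration and to dispatch each direction of the biconditional separately. Concretely, set $f(x) := \sum_{s=0}^{x} p_s$, $N_0 := 0$, and $N_{t+1} := f(N_t)$; the interpretation is that $N_t$ counts the standers after $t$ synchronous rounds, where in each round every spectator whose shyness does not exceed the current stander count joins in. The sequence $(N_t)$ is non-decreasing, bounded above by $P := \sum_{s \leq k} p_s$, and converges to a fixed point $N^\star$ of $f$. Since $p_k \geq 1$, one has $f(x) = P$ iff $x \geq k$, so a standing ovation (i.e., $N^\star = P$) is equivalent to $N^\star \geq k$. This reformulation reduces the lemma to controlling whether $N^\star$ reaches $k$.

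For the direction ``$k$-soluble $\Rightarrow$ ovation'' I would prove by induction on $t \in \{1, \ldots, k\}$ that $N_t \geq t$ (the degenerate case $k = 0$ being immediate from $p_0 \geq 1$). The base case $N_1 = p_0 \geq 1$ is exactly $k$-solubility at $s' = 1$. For the inductive step, monotonicity of $f$ together with $k$-solubility at $s' = t+1$ gives $N_{t+1} = f(N_t) \geq f(t) = \sum_{s \leq t} p_s \geq t+1$. Specialising at $t = k$ yields $N^\star \geq N_k \geq k$, whence the ovation.

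For the converse, I would argue the contrapositive. Assume $k$-insolubility and let $s' \in [k]$ be the \emph{minimal} index with $\sum_{j < s'} p_j < s'$ (which forces $s' \geq 1$). Suppose for contradiction that some spectator of shyness $\geq s'$ ever stands, and let $t^\star$ be the first instant at which this occurs. At $t^\star$ there are $\geq s'$ standers, and by minimality of $t^\star$ all of them have shyness $< s'$. Hence $\sum_{j < s'} p_j \geq s'$, contradicting the choice of $s'$. So no spectator of shyness $\geq s'$ ever stands; since $p_k \geq 1$ and $s' \leq k$, the shyness-$k$ group never rises, blocking the ovation.

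The only real subtlety I anticipate is committing to a well-defined discrete-time model so that both the ``first instant'' argument in the converse and the limit $N^\star$ in the direct direction are unambiguous. Once the synchronous-update recursion above is adopted (it is straightforward to check that it is order-equivalent to any reasonable asynchronous schedule, since a spectator who is ever eligible to stand stays eligible), both halves of the lemma reduce to the one-line induction and the extremal-element argument sketched above.
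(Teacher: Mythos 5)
Your proof is correct, and it is considerably more substantive than the paper's own argument. The paper disposes of the lemma with a single chain of ``iff''s, in which the first link --- ``there is eventually a standing ovation iff for every shyness level $s \in [k]$ there are at least $s$ spectators with shyness level less than $s$'' --- is precisely the dynamical content of the lemma and is asserted without justification. Your monotone iteration $N_{t+1} = f(N_t)$ with $f(x) = \sum_{s \le x} p_s$ supplies exactly that missing justification: it pins down a well-defined model of the applause, the induction $N_t \ge t$ proves the ``soluble $\Rightarrow$ ovation'' direction, and the minimal-violated-level argument proves the contrapositive of the converse. Both proofs rest on the same underlying counting criterion, so this is an elaboration rather than a genuinely different route, but yours is the one that would survive refereeing; the paper's buys only brevity. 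One small imprecision to tidy up: in the converse, the spectator of shyness $\sigma \ge s'$ who first stands at time $t^\star$ requires $N_{t^\star - 1} \ge \sigma \ge s'$ standers \emph{already up before} $t^\star$, and it is those $N_{t^\star-1}$ spectators (not the standers ``at $t^\star$'', which would include the newcomer) who all have shyness $< s'$ by minimality of $t^\star$; with that reading the inequality $\sum_{j < s'} p_j \ge N_{t^\star-1} \ge s'$ gives the desired contradiction.
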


\begin{proof} Indeed, ``there is eventually a standing ovation'' iff
  ``for every shyness level $s \in [k]$ there are at least
  $s$ spectators with shyness level less than $s$'' iff ``$\sum_{j \in
    [s-1]}p_j \ge s$, $\forall s \in [k]$'' iff ``the
  audience is $k$-soluble''.
\end{proof}

\subsection{The program proper}
Consider the following short program:
\begin{itemize}
\item[1.] \textbf{INITIALIZE} $s \leftarrow 1$, $r \leftarrow 0$.
\item[2.] \textbf{CHECK} If $\sum_{j \in [s-1]}p_j < s$, then
  \begin{itemize}
    \item[3.] \textbf{INVITE} a friend with any shyness level $s' \in [s]$.
    \item[4.] \textbf{UPDATE} $p_{s'} \leftarrow p_{s'} + 1$, $r \leftarrow r + 1$.

  \end{itemize}
\item[5.] \textbf{UPDATE} $s \leftarrow s + 1$.
\item[6.] \textbf{CHECK} If $s = k$, then \textbf{RETURN} $r$. Else
  \textbf{GOTO} 2.
\end{itemize}

\begin{theorem}
The above program terminates after exactly $k$ steps.
Once it terminates, the resulting audience is $k$-soluble, and thus
there will eventually be a standing ovation. Moreover, the program
outputes the least number of friends to invite, namely $r(p_0, p_1,
..., p_k)$.
\label{thm:main_thm}
\end{theorem}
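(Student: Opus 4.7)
The plan is to verify the theorem's three claims in order: (i) termination in $\mathcal{O}(k)$ iterations of the main loop, (ii) $k$-solubility of the final audience, and (iii) optimality of the returned count. Termination is immediate because $s$ is strictly incremented once per pass and the loop exits as soon as $s$ reaches $k$.

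For (ii), I would denote by $\tilde{S}_s := \sum_{j \in [s-1]} p_j$ the current (possibly augmented) partial sums, and establish by induction on $s$ the loop invariant: just before the \textbf{CHECK} at level $s$, one has $\tilde{S}_s \ge s - 1$. The base $\tilde{S}_1 = p_0 \ge 0$ is trivial. For the inductive step, after \textbf{CHECK} at level $s$ the invariant is boosted to $\tilde{S}_s \ge s$ (either \textbf{CHECK} was already satisfied, or the single invited friend at some $s' < s$ lifts the sum by one); the transition $\tilde{S}_{s+1} = \tilde{S}_s + p_s \ge s$ re-establishes the invariant one level up. Two benefits follow at once: at each level at most one friend is required (so the loop body is $\mathcal{O}(1)$ and the program is $\mathcal{O}(k)$), and upon termination $\tilde{S}_s \ge s$ holds for every $s$, i.e., the audience is $k$-soluble, whence Lemma~\ref{thm:xtic} delivers the standing ovation.

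For (iii), I would first obtain a lower bound on $r(p_0,\ldots,p_k)$ by a linear-programming--style argument. Write $S_s := \sum_{j \in [s-1]} p_j$ for the original partial sums and suppose one invites $q_j$ friends at shyness level $j$; $k$-solubility imposes $Q_s := \sum_{j \in [s-1]} q_j \ge s - S_s$ for every $s \in [k]$. Since $Q_s$ is non-decreasing in $s$ and the total number of friends is at least $Q_k$, we obtain
\[
r(p_0,\ldots,p_k)\;\ge\; M \;:=\; \max_{s \in [k]} \max\bigl(0,\, s - S_s\bigr),
\]
and packing all $M$ friends at shyness $0$ saturates the bound. It remains to show the algorithm achieves $r_k = M$. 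A second induction on $s$ shows that the counter $r_s$ at the end of level $s$ equals $\max\bigl(0, \max_{1 \le s' \le s}(s' - S_{s'})\bigr)$: if the check at level $s$ does not fire, $r_s = r_{s-1}$ and the claim is clear; if it does, then $r_{s-1} < s - S_s$ combined with the Lipschitz estimate $s - S_s \le (s-1 - S_{s-1}) + 1 \le r_{s-1} + 1$ forces $r_{s-1} = s - S_s - 1$, so the unique increment brings $r_s$ exactly to $s - S_s$.

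The main obstacle is this last coupling: proving that the myopic, one-at-a-time greedy rule never falls behind the global optimum $M$. The crux is the ``unit-Lipschitz'' increment $M_s - M_{s-1} = 1 - p_{s-1} \le 1$ of the deficits $M_s := s - S_s$, which locks the discrete greedy update to the running maximum and prevents the program from ever needing a batch of invitees at a single level. Without this step-size control one could worry that a late shyness level demands several simultaneous friends that the program, having committed to at most one per step, cannot supply; the $\le 1$ jump rules this out and pins $r_k = M = r(p_0,\ldots,p_k)$.
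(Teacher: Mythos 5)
Your proof is correct, but it takes a genuinely different route from the paper's. The paper's own proof of Theorem~\ref{thm:main_thm} is a one-liner: it observes that the program transcribes the recursion \eqref{eq:formula} of Lemma~\ref{thm:formula} ``word-for-word'', so optimality is entirely inherited from that lemma --- which in turn rests on the exchange argument hidden inside Lemma~\ref{thm:small_lemma}, whose proof the paper declares ``Straightforward. Nothing to do.'' You bypass the recursion altogether and instead prove a closed-form identity $r(p_0,\dots,p_k)=\max_{s\in[k]}\max(0,\,s-S_s)$: the lower bound follows from monotonicity of the cumulative invitation counts $Q_s$ together with the necessity of $S_s+Q_s\ge s$ (Lemma~\ref{thm:xtic} applied to the final audience), and the upper bound from your loop invariant combined with the unit-Lipschitz estimate $M_s-M_{s-1}=1-p_{s-1}\le 1$ on the deficits, which is precisely what pins the greedy one-friend-per-level counter to the running maximum. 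What your route buys: an explicit formula for $r$, a self-contained optimality argument that makes rigorous the step the paper waves away, and a clear reason why a single invitee per level always suffices. What the paper's route buys: brevity and a proof whose structure mirrors the program line by line. Two caveats, both inherited from the paper's pseudocode rather than from your argument: you (rightly) read the invited friend's shyness as $s'\in[s-1]$ rather than the literal $s'\in[s]$ (a friend with shyness exactly $s$ would not raise $\sum_{j\in[s-1]}p_j$, so the violated check would stay violated), and your assertion that $\tilde S_s\ge s$ holds for \emph{every} $s\in[k]$ at termination presumes the check is performed at level $k$ itself, whereas the literal exit test ``if $s=k$ then return'' skips it --- compare the paper's own example $r(0,\dots,0,p_k)=k$, which the literal program would report as $k-1$. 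Neither caveat is a gap in your reasoning; both are off-by-one quirks of the stated program that your proof silently (and correctly) repairs.
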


We will need the following useful Lemmas for the proof.
\begin{lemma}
If the audience is $s$-soluble but $(s+1)$-insoluble, then
it becomes $(s+1)$-soluble upon the invitation of a friend with any
shyness level less than $s + 1$.
\label{thm:small_lemma}
\end{lemma}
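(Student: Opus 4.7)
The plan is to unfold both hypotheses to pin down \emph{where} the insolubility lives, then verify $(s+1)$-solubility by a short case split on the index being tested.

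First, I would combine $s$-solubility with $(s+1)$-insolubility to locate the witness of the latter. Since $s$-solubility forces $\sum_{j\in[s'-1]}p_j \ge s'$ for every $s'\in[s]$, the witness of $(s+1)$-insolubility cannot live in $[s]$, so it must equal $s+1$ itself, yielding $\sum_{j\in[s]}p_j \le s$. Combining with the $s$-solubility inequality at $s'=s$ (the degenerate case $s=0$ being checked directly) sharpens this into the identity $\sum_{j\in[s]}p_j = s$, which is the key structural fact driving the whole argument.

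Now let $s''\in[s]$ denote the shyness of the invited friend and write $p'$ for the updated distribution (so $p'_{s''}=p_{s''}+1$ and $p'_j=p_j$ otherwise). To verify $(s+1)$-solubility I would test the inequality at each $t\in[s+1]$ according to whether $s''$ lies inside the partial sum $\sum_{j\in[t-1]}p'_j$: if $t\le s''$ the sum is untouched and therefore $\ge t$ by $s$-solubility; if $s''<t\le s$ the sum gains a unit and is still $\ge t$; and for the critical $t=s+1$ the sum becomes exactly $s+1$, by the identity above. The only delicate point is this last case split, where the bookkeeping between $t$ and $s''$ must be done carefully; but this is precisely where the identity $\sum_{j\in[s]}p_j=s$ pays off, making the decisive inequality tight rather than false. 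Every other step is a mechanical substitution of definitions.
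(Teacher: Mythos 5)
Your proof is correct and complete. The paper offers no argument here at all (its proof reads ``Straightforward. Nothing to do.''), so there is no approach to compare against; your write-up supplies exactly the missing content. The decisive observation is the one you isolate: $s$-solubility at $s'=s$ gives $\sum_{j\in[s-1]}p_j\ge s$, hence $\sum_{j\in[s]}p_j\ge s$, while the unique admissible witness of $(s+1)$-insolubility forces $\sum_{j\in[s]}p_j\le s$, so the sum equals $s$ exactly and one additional spectator of shyness at most $s$ raises it to $s+1$; your three-way case split on $t$ versus $s''$ then verifies every inequality in the definition, including the untouched ones. The only stylistic quibble is that the second and third cases could be merged (for every $t\le s$ the new partial sum dominates the old one, which is already $\ge t$ by $s$-solubility), leaving $t=s+1$ as the single case that genuinely uses the tightness identity.
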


\begin{proof}
Straightforward. Nothing to do.
\end{proof}

\begin{lemma}
  Define $0 < s_0$ := least $s \in [k]$ s.t. the audience is
  $s$-insoluble ($s_0 := \infty$ if no such $s$ exists).
  Then it holds that
  \begin{eqnarray}
    \label{eq:formula}
    \left .
    \begin{aligned}
      r(p_0,p_1,...,p_k) = 0 \text{ if }s_0 = \infty, \text{ and }
      r(p_0,p_1,...,p_k) = \\ 1 + r(p_0,...,p_{s' - 1}, p_{s'} + 1, p_{s' + 1}, ...,
      p_k) \forall s' \in [s_0-1], \text { else}.
      \end{aligned}
    \right\}
    \end{eqnarray}
  \label{thm:formula}
\end{lemma}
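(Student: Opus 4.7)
The plan is to split by whether $s_0 = \infty$ or $s_0 < \infty$. In the first case the audience is $s$-soluble for every $s \in [k]$, hence $k$-soluble, so Lemma~\ref{thm:xtic} delivers a standing ovation without any invited friend, giving $r(p_0, \ldots, p_k) = 0$. The substance lies in the second case, where I want to prove $r(\vec{p}) = 1 + r(\vec{p} + e_{s'})$ for every $s' \in [s_0 - 1]$, writing $\vec{p} := (p_0, \ldots, p_k)$ and $\vec{p} + e_{s'}$ for the distribution obtained by bumping $p_{s'}$ up by one. The upper bound $r(\vec{p}) \le 1 + r(\vec{p} + e_{s'})$ comes from a direct construction: invite one friend of shyness $s'$ first, then follow with an optimal batch of $r(\vec{p} + e_{s'})$ further friends calibrated to the updated distribution.

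For the reverse inequality $r(\vec{p} + e_{s'}) \le r(\vec{p}) - 1$, I would first note that minimality of $s_0$ forces $\vec{p}$ to be $(s_0 - 1)$-soluble, so the only possible witness of $s_0$-insolubility is $s_0$ itself, which gives $\sum_{j \in [s_0-1]} p_j \le s_0 - 1$. Now fix any optimal friend multiset $G$ for $\vec{p}$ of size $r^\star := r(\vec{p})$. Applying $k$-solubility of $\vec{p} + G$ at level $s = s_0$ against the previous inequality forces $|G \cap [s_0 - 1]| \ge 1$, so some $t_0 \in G$ satisfies $t_0 \le s_0 - 1$. The claim is that $G \setminus \{t_0\}$ already works for $\vec{p} + e_{s'}$, which instantly delivers $r(\vec{p} + e_{s'}) \le r^\star - 1$.

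The verification is a case analysis on $s \in [k]$. Relative to $\vec{p} + G$, the partial sum $\sum_{j \in [s-1]}$ gains one when $s' < s$ (the new spectator) and loses one when $t_0 < s$ (the dropped friend), so the net change is non-negative except in the regime $t_0 < s \le s'$, where it equals $-1$. In that regime $s \le s' \le s_0 - 1$ makes $\vec{p}$ $s$-soluble so $\sum_{j \in [s-1]} p_j \ge s$, and $t_0 \in G \cap [s-1]$ guarantees $|G \cap [s-1]| \ge 1$; together these show that the cumulative sum under $\vec{p} + G$ already exceeds $s$ by at least one, enough to absorb the deficit. The main obstacle I anticipate is precisely this coincidence: the witness $t_0$ that must exist inside $G$ in order for $s_0$-insolubility to be resolved is exactly the element whose removal the partial-sum slack can afford in the critical regime. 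Once that dual role is recognised, the rest is bookkeeping.
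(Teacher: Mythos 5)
Your proof is correct, and it is considerably more complete than the one the paper actually gives. For the case $s_0 < \infty$ the paper simply invokes Lemma~\ref{thm:small_lemma} (whose own proof reads ``Straightforward. Nothing to do.'') and asserts that inviting a friend of shyness $s' \in [s_0-1]$ ``subtracts $1$'' from the optimum. That assertion hides two inequalities. The easy one, $r(\vec p)\le 1+r(\vec p+e_{s'})$, is your direct construction and is indeed immediate. The hard one, $r(\vec p+e_{s'})\le r(\vec p)-1$ --- equivalently, that the greedy invitation is never wasted --- is exactly what the paper leaves unproved and what your exchange argument supplies: you use $k$-solubility of $\vec p+G$ at level $s_0$, together with $\sum_{j\in[s_0-1]}p_j\le s_0-1$ (which you correctly extract from the minimality of $s_0$), to force a friend $t_0\le s_0-1$ inside any optimal multiset $G$, and then check that $G\setminus\{t_0\}$ still certifies $k$-solubility of $\vec p+e_{s'}$. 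Your handling of the only delicate regime $t_0<s\le s'$, where the swap costs one unit of the partial sum but the $s$-solubility of $\vec p$ for $s<s_0$ plus the membership $t_0\in[s-1]$ supply a compensating unit of slack, is precisely the step that needs an argument and receives none in the paper. Two small points worth making explicit: $s_0\ge 1$ always (the $0$-insolubility condition is vacuous), so $[s_0-1]$ is nonempty; and one may restrict optimal friend multisets to shyness levels in $[k]$, since lowering a friend's shyness never decreases any partial sum, which legitimises reading ``$G$ works'' as ``$\vec p+G$ is $k$-soluble'' via Lemma~\ref{thm:xtic}.
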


\begin{proof}
Indeed if $s_0 < \infty$, then by Lemma \ref{thm:small_lemma}
inviting a friend with any shyness $s' \in [s_0-1]$ will
simply subtract $1$ from the least number of friends required to produce a
standing ovation. This proves the first part of formula
\eqref{eq:formula}. On the other hand, if $s_0 = \infty$, then the
audience is $k$-soluble, and thus (Lemma \ref{thm:xtic}) will
eventually produce a standing ovation on its own.

\end{proof}

\begin{proof}[Proof of Theorem \ref{thm:main_thm}]
Indeed, the program does nothing but compute $r(p_0,p_1, ..., p_k)$
via the formula \eqref{eq:formula} established in Lemma
\ref{thm:formula}, word-for-word. Also, by construction, it halts
after exactly $k$ steps and its output $r$ is at most $k$. We are
done.
\end{proof}

\paragraph{Example attaining the bound} $r(0, 0, ...., 0, p_k) = k$.

\small
\bibliographystyle{splncs03}
\bibliography{bib}

\end{document}